\theoremstyle{plain}
\newtheorem{theorem}{Theorem}
\newtheorem{lemma}[theorem]{Lemma}
\newtheorem{corollary}[theorem]{Corollary}
\theoremstyle{definition}
\newtheorem{definition}[theorem]{Definition}
\theoremstyle{remark}
\newtheorem{remark}[theorem]{Remark}
\newtheorem*{notation}{Notations}
\newcommand{\p}{{\mathbb P}}
\begin{document}
\title[On homaloidal polynomials]{On homaloidal polynomial functions
of degree 3 and prehomogeneous vector spaces}

\author{Pierre-Emmanuel Chaput}
\address{D{\'e}partement de Math{\'e}matiques
BP 92208
2, Rue de la Houssini{\`e}re
F-44322 Nantes Cedex 03 - France}
\email{chaput@math.univ-nantes.fr}

\author{Pietro Sabatino}
\address{Via Delle Mimose 9 Int. 14, 00172 - Roma Italy}
\email{pietrsabat@gmail.com}

\subjclass[2010]{Primary 14E05; Secondary 14M17, 14E07, 14M07, 17C20, 11S90}

\keywords{Prehomogeneous vector spaces, prehomogeneous symmetric space,
projective geometry, Severi varieties, Jordan algebras, secant variety,
homaloidal polynomial, multiplicative Legendre transform}

\begin{abstract}
  In this paper we consider homaloidal polynomial functions $f$ such that their
  multiplicative Legendre transform $f_*$, defined as in \cite[Section
  3.2]{ekp},  
  is again polynomial. Following Dolgachev \cite{dolgachev:pct}, we call such
  polynomials EKP-homaloidal. We prove that every EKP-homaloidal 
  polynomial function of degree three
  is a relative invariant of a symmetric prehomogeneous vector space.
  This provides a complete proof of \cite[Theorem 3.10, p.~39]{ekp}.
  With respect to the original argument of Etingof, Kazhdan and Polischuk our
  argument focuses more on prehomogeneous vector spaces and, in principle, it 
  may
  suggest a way to attack the more general problem raised in \cite[Section
  3.4]{ekp} of classification of EKP-homaloidal
  polynomials of arbitrary degree.
\end{abstract}
\maketitle

Let $V$ be a complex vector space of dimension greater or equal to two
and denote by $V^*$ its dual. 
If $f$ is a function on $V$ and $x \in V$, we denote by $f'(x) \in V^*$ the
differential of $f$ at $x$.
Let $f$ be a homogeneous function
defined on $V$ such that the Hessian of $\ln f$ is generically invertible.
There always exists a homogeneous
function $f_*$ defined on $V^*$ satisfying 
\begin{equation}
  f_*\left(\frac{f'(x)}{f(x)}\right)=\frac{1}{f(x)}
  \label{equation:definition}
\end{equation}
on an open subset of $V$ (\cite[p.~193, Lemma 1]{dolgachev:pct}). Following
\cite{ekp} we call $f_*$ the \emph{multiplicative Legendre transform} of
$f$. Assume further that $f$ is polynomial.
It turns out that $f$ is homaloidal, i.e. its polar map
$x \mapsto f'(x)$ induces a
birational map $\mathbb{P}(V) \dasharrow \mathbb{P}(V^*)$, if and only
if $f_*$ is a rational function, see \cite[p.~37, Proposition 3.6]{ekp} or
\cite[p.~193, Theorem 2]{dolgachev:pct}.
By \cite[Proposition 3.5]{ekp}, we have, under the canonical identification
$V=V^{**}$:
\begin{equation}\label{equation:symmetry}
  f_{**}=f,
\end{equation}
\begin{equation}\label{equation:logdifferential}
  \frac{f_*'}{f_*}\circ \frac{f'}{f}={\rm id}_V.
\end{equation}

\begin{definition} 
  Following \cite{dolgachev:pct}, if $f$ is a 
  homogeneous polynomial function such that
  $\mathrm{det}\ \mathrm{Hess}(\ln f)\ne 0$,
  we call it \emph{EKP-homaloidal} if its
  multiplicative Legendre transform $f_*$ is again polynomial. In this case 
  $f_*$ is a homogeneous polynomial function too and $\deg(f)=\deg(f_*)$. 
\end{definition}

In particular every EKP-homaloidal polynomial
function is homaloidal as it follows from
the above discussion.

\begin{notation}
  Let $f$ by an EKP-homaloidal polynomial function.
  We introduce the following notations
  \begin{equation*}
    \mathbb{P}(V)\supseteq V(f)=X\supseteq \mathrm{Sing}(X)=Z
  \end{equation*}
  and
  \begin{equation*}
    \mathbb{P}(V^*)\supseteq V(f_*)=X_*\supseteq \mathrm{Sing}(X_*)=Z_*
  \end{equation*}
  Where by $V(f)$ we denote the hypersurface defined by $f$ in the projective
  space.
  Moreover in what follows
  we are going to denote by $\phi_f$ and $\phi_{f_*}$ the polar maps associated
  to $f$ and $f_*$ respectively.

  Whenever $W \subset \p(V)$ is a projective variety, we denote by $W^\vee$ its
  dual variety, namely the closure of the set of hyperplanes which are tangent
  at a point in $W$. This is a subvariety of $\p(V^*)$.  
  We denote by $Q_f$ the polarization of $f$, that is the only
  three-linear symmetric form on $V$ such that
  $Q_f(A,A,A)=f(A)$, for every $A\in V$.  Note in particular that
  \begin{equation}
    \label{equation:diffpolarization}
    3Q_f(A,A,-)=f'(A)\in V^*
  \end{equation}
  and moreover 
  \begin{equation}
    \label{equation:seconderivatives}
    \frac{\partial^2}{\partial A \partial B}f(x)=6Q_f(A,B,x)
  \end{equation}
  for every $A,B,x\in V$.

  For every $A\in V$ such that $f(A)\neq 0$, denote by
  \begin{equation*}
    \tau_{f, A}:=d_A\left(\frac{f'}{f}\right):V\to V^*
  \end{equation*}
  the linear map given by the differential at $A$ 
  of the rational map
  $\frac{f'}{f}: V\dashrightarrow V^*$, or in other words the
  second logarithmic
  differential of $f$ computed at $A$.
\end{notation}

We refer to \cite{kimura:ipvs} for the definition
of prehomogeneous vector space
and to \cite{bertram} for the definition of prehomogeneous symmetric space.

\begin{theorem}  \label{theorem:principal}
  Let $f$ be an irreducible EKP-homaloidal polynomial
  function of degree three. Then $f$ is
  the relative polynomial invariant of an
  irreducible prehomogeneous symmetric space.
\end{theorem}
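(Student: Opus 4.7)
The plan is to endow $V$ with a cubic Jordan algebra structure whose norm form is $f$, and then to realise $f$ as the relative invariant of the structure group of this algebra acting linearly on $V$.

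First, I would choose a base point $e\in V$ with $f(e)\neq 0$ and, after rescaling, assume $f(e)=1$. The hypothesis $\det\,\mathrm{Hess}(\ln f)\neq 0$ makes the symmetric bilinear form $B(A,B):=6\,Q_f(e,A,B)$ non-degenerate, and I would use $B$ to identify $V^*$ with $V$. Under this identification the polar map $\phi_f:\p(V)\dashrightarrow \p(V^*)$ becomes a birational self-map of $\p(V)$ whose inverse, by (\ref{equation:logdifferential}) and the symmetry $f_{**}=f$, is realised by $\phi_{f_*}$. The EKP-homaloidal hypothesis then says that this inverse lifts to a polynomial map $A\mapsto A^{\#}$ of degree two from $V$ to $V$. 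Directly from $f_{**}=f$ one obtains the adjoint identity $(A^{\#})^{\#}=f(A)\,A$, and the choice of $e$ forces $e^{\#}=e$.

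The second step is to verify that the data $(V,e,f,A\mapsto A^{\#})$ constitutes a cubic norm structure in the sense of Jacobson, McCrimmon and Springer. Beyond the adjoint identity and the normalisation at $e$, the required conditions are certain linearised identities relating $A^{\#}$, the trace bilinear form obtained from $Q_f$, and the cross product $A\times B:=(A+B)^{\#}-A^{\#}-B^{\#}$. I would extract these by repeated differentiation of (\ref{equation:definition}) and (\ref{equation:logdifferential}), using (\ref{equation:diffpolarization}) and (\ref{equation:seconderivatives}) to translate derivatives of $f$ into multilinear operations defined by $Q_f$. Once the cubic norm structure is in hand, Springer's construction produces a Jordan product $\circ$ on $V$ with $e$ as identity and $f$ as norm, whose associated quadratic operators $U_A$ are determined by $Q_f$ and $A^{\#}$.

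Third, the irreducibility of $f$ forces simplicity of the resulting Jordan algebra; classical structure theory then places it in the short list of simple cubic Jordan algebras (essentially the Hermitian $3\times 3$ matrices over a composition algebra, together with their degenerations). For each such algebra the structure group $\mathrm{Str}(V)$ acts linearly on $V$ with open orbit $\{A\in V:f(A)\neq 0\}$, exhibiting $(V,\mathrm{Str}(V))$ as a prehomogeneous vector space with relative invariant $f$. The Jordan inversion $A\mapsto A^{-1}=A^{\#}/f(A)$ is an involution of this open orbit fixing $e$, which realises $(V,f)$ as an irreducible prehomogeneous symmetric space in the sense of \cite{bertram}.

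The main obstacle is the second step: deducing the linearised Jordan-type identities from the sole hypothesis that $f_{*}$ is a polynomial. The adjoint identity itself is morally equivalent to $f_{**}=f$, but the subtler multilinear identities encoding the Jordan axiom require careful manipulation of the second and third logarithmic derivatives of $f$ together with a skilful use of the birationality of $\phi_f$ and the interaction between $\tau_{f,A}$ and $\tau_{f_*,\phi_f(A)}$. This is precisely where the reformulation in terms of prehomogeneous vector spaces should pay off: identifying the generic stabiliser and exploiting the representation theory of its Lie algebra is expected to replace the more direct, and delicate, computations used by Etingof, Kazhdan and Polishchuk.
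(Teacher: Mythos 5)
Your outline inverts the paper's logic. The paper first proves that the group $G=\{g\in GL(V):g(X)=X\}$ acts transitively on $V\setminus\{f=0\}$, using the second logarithmic differentials $\tau_{f,A}$ and their geometric interpretation on the singular locus $Z$ (Lemma \ref{lemma:principal2} and Corollary \ref{corollary:action}), and only afterwards, in the proof of Corollary \ref{corollary:principal}, extracts a Jordan algebra via McCrimmon's Theorem \ref{theorem:mccrimmon} --- whose hypothesis $F(H_A(B))=h(A)Q(B)$ is supplied precisely by the already-established fact that $H_A=\tau_{f,I}^{-1}\circ\tau_{f,A}$ lies in $G$. You propose the opposite order: build a cubic Jordan algebra first, directly from the Legendre-transform identities, and then read off the prehomogeneous symmetric space from its structure group. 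That order is in principle viable (it is closer to the original argument of Etingof, Kazhdan and Polishchuk), but as written your argument has a genuine gap exactly at its load-bearing step.

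Concretely: (1) you never verify the cubic norm structure axioms; you say you ``would extract these by repeated differentiation'' and then concede in your last paragraph that this is ``the main obstacle.'' This is not bookkeeping: the adjoint identity together with $e^{\#}=e$ and a nondegenerate trace form do not by themselves produce a Jordan algebra --- the linearised identities (the trace--adjoint formula $T(x^{\#},y)=(\partial_yf)(x)$, the relation $e\times x=T(x)e-x$, and their strict versions) carry all the content, and it is not clear they follow from polynomiality of $f_*$ by differentiation alone; the paper's detour through transitivity of $G$ exists precisely to avoid having to prove them by hand. (2) Even your ``easy'' claims presuppose one of the paper's key lemmas. The identity $f_*'(f'(A))=f(A)A$ only becomes the adjoint identity $(A^{\#})^{\#}=f(A)A$ if the linear identification $V\cong V^*$ you choose carries $f_*$ back to a scalar multiple of $f$, i.e.\ maps $X_*$ onto $X$; that is Corollary \ref{corollary:action}(i), proved via the interpretation of $\tau_{f,e}$ as sending singular points of $X$ to tangent hyperplanes, and it is not automatic. (Moreover the identification should be $\tau_{f,e}$ itself, up to sign, which differs from your form $6Q_f(e,A,B)$ by the rank-one term $9Q_f(e,e,A)Q_f(e,e,B)$; with your normalisation one gets $e^{\#}=4e$, not $e$.) (3) ``Irreducibility of $f$ forces simplicity of the resulting Jordan algebra'' is asserted, not proved; the paper obtains simplicity from \cite[Theorem V.4.6]{bertram} only after the symmetric-space structure is in place. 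Until (1) is actually carried out, the proposal is a plan rather than a proof.
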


\noindent
Recall that if $W \subset {\mathbb P}(V)$ is a projective variety,
its secant variety is by definition the closure of
the union of all the secant lines in ${\mathbb P}(V)$ passing through
two distinct points in $W$.
Recall also \cite[Chapter IV]{zak} the definition and classification of 
Severi varieties.

\begin{remark}
  The following Corollary is stated in \cite{ekp}, but its proof in the form
  provided there, does not seem to be complete. In particular in the proof of
  Proposition 3.16, last two lines of the proof, the argument based on the rank
  of the hessian matrix alone it is not enough to conclude in general that $Z$ is
  smooth.

  Our proof of Corollary \ref{corollary:principal} follows a completely
  different approach than that in \cite{ekp}. We show how to put 
  on $V$ a structure of
  prehomogeneous vector space in which $f$ is a polynomial invariant, 
  this is the content of our Theorem \ref{theorem:principal}. 
  As a consequence we will get Corollary \ref{corollary:principal}. 
  In our opinion this may be helpful in attacking the more general question 
  raised
  in \cite[Section 3.4]{ekp} of classification of EKP-homaloidal polynomial
  functions.
\end{remark}

\begin{corollary} \cite[Theorem 3.10]{ekp}
  If $f$ satisfies the hypothesis of the above Theorem then the singular
  locus of $V(f)$
  is one of the four classical Severi varieties and
  its secant variety is $V(f)$.
  \label{corollary:principal}
\end{corollary}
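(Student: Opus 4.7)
The plan is to deduce the corollary directly from Theorem \ref{theorem:principal} by invoking the classification of irreducible prehomogeneous symmetric spaces admitting an irreducible cubic relative invariant. First, applying Theorem \ref{theorem:principal} produces a prehomogeneous symmetric space $(G,V)$ for which $f$ is the relative polynomial invariant. Since $f$ is irreducible of degree three, Bertram's framework, together with the Sato--Kimura list of irreducible prehomogeneous vector spaces, restricts $(G,V)$ to precisely four possibilities: the spaces associated to the four simple rank-three Euclidean Jordan algebras $\mathrm{Herm}_3(\mathbb{K})$ for $\mathbb{K}=\mathbb{R},\mathbb{C},\mathbb{H},\mathbb{O}$, of dimensions $6,9,15,27$. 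In each model $f$ coincides, after a linear change of coordinates, with the generic Jordan determinant.

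Next, I would identify $Z=\mathrm{Sing}(X)$ with the projectivization of the rank one elements of the associated Jordan algebra. The Jacobian ideal of the cubic norm is generated by the $2\times 2$ Jordan minors, so $Z$ is cut out precisely by the condition $\mathrm{rk}\leq 1$; classical computations then recognize the resulting four projective varieties as the Veronese surface $v_2(\mathbb{P}^2)\subset\mathbb{P}^5$, the Segre $\mathbb{P}^2\times\mathbb{P}^2\subset\mathbb{P}^8$, the Grassmannian $G(1,5)\subset\mathbb{P}^{14}$, and the Cayley plane $E_6/P\subset\mathbb{P}^{26}$, which is exactly Zak's list of Severi varieties. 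For the secant statement, the spectral theorem in a rank three Jordan algebra writes every element as $\lambda_1 e_1+\lambda_2 e_2+\lambda_3 e_3$ for some Jordan frame $(e_1,e_2,e_3)$, so its cubic norm vanishes iff some $\lambda_i$ is zero, iff the element is a sum of at most two rank one elements. Projectivizing yields $V(f)=\mathrm{Sec}(Z)$ at once.

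The main obstacle I expect is ensuring that the classification of irreducible prehomogeneous symmetric spaces with an irreducible cubic invariant yields exactly these four cases and no more. If the cited literature does not state it in precisely this form, a short direct argument via graded simple Lie algebras will be required: the hypotheses force the associated simple Lie algebra to carry a three-grading of type $A_2$, $A_5$, $D_5$ or $E_7$ whose degree one piece is one of the Jordan algebras above. Once this identification is in hand, Zak's rigidity theorem on Severi varieties (or an equally classical case-by-case secant computation in each of the four concrete Jordan models) closes the proof.
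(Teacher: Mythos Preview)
Your proposal is correct in outline and lands on the same endpoint as the paper, but the route differs. You go classification-first: invoke Sato--Kimura (or a graded-Lie-algebra argument) to pin down $(G,V)$ among the four rank-three Jordan algebras, and only then read off $Z$ and $\mathrm{Sec}(Z)$. The paper instead builds the Jordan algebra structure on $V$ intrinsically. It observes that $H_A:=\tau_{f,I}^{-1}\circ\tau_{f,A}$ lies in $G$, so $f(H_A(B))=h(A)f(B)$ for some rational $h$; this is exactly the hypothesis of McCrimmon's theorem, which then equips $V$ with a Jordan product satisfying the composition law $f(R_A(B))=f(A)^2 f(B)$. A comparison with Bertram's product and his Theorem V.4.6 shows the algebra is simple with $f$ as its norm, and Jacobson's classification plus Zak finishes. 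The advantage of the paper's route is that it sidesteps precisely the obstacle you flag: one never has to extract ``irreducible prehomogeneous symmetric space with irreducible cubic invariant $\Rightarrow$ one of four cases'' from the literature, because McCrimmon's criterion manufactures the Jordan structure directly from the identity $H_A\in G$ already established in the proof of Theorem~\ref{theorem:principal}. Your approach buys a more geometric picture (rank stratification, spectral decomposition, explicit secant computation), at the cost of leaning harder on classification.

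One small correction: in your fallback argument the three-gradings should be on Lie algebras of types $C_3$, $A_5$, $D_6$, $E_7$ (not $A_2$, $A_5$, $D_5$, $E_7$); these are the conformal Lie algebras of $\mathrm{Herm}_3(\mathbb{K})$ for $\mathbb{K}=\mathbb{R},\mathbb{C},\mathbb{H},\mathbb{O}$.
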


Before proving Theorem \ref{theorem:principal} and then Corollary
\ref{corollary:principal} we are going to establish a number of preliminary
lemmas.

\begin{lemma}
  \label{lemma:principal1}
  If $f$ is an irreducible EKP-homaloidal polynomial
  function of degree three then 
  \begin{equation*}
    \phi_f(X\setminus Z)\subseteq Z_*, \quad \phi_{f_*}
    (X_*\setminus Z_*)\subseteq Z
  \end{equation*}
  in particular $Z,Z_*\ne \emptyset$.
\end{lemma}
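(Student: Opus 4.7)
The plan is to extract from the logarithmic differential identity \eqref{equation:logdifferential} together with \eqref{equation:definition} a clean polynomial identity of the form $f_*'(f'(x)) = f(x)\cdot x$, and to read off both inclusions directly from it. Substituting $y = f'(x)/f(x)$ into \eqref{equation:logdifferential} gives $f_*'(y) = x\cdot f_*(y)$, and \eqref{equation:definition} rewrites this as $f_*'(f'(x)/f(x)) = x/f(x)$. Since $f_*$ has degree three, $f_*'$ is homogeneous of degree two, which lets us clear denominators and obtain
\begin{equation*}
  f_*'(f'(x)) = f(x)\cdot x
\end{equation*}
on the open locus where $f(x) \neq 0$; both sides being polynomial maps $V \to V$, it then holds everywhere. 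An analogous homogeneity argument applied to \eqref{equation:definition} itself yields the polynomial identity $f_*(f'(x)) = f(x)^2$.

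For $[x] \in X \setminus Z$ we have $f(x) = 0$ and $f'(x) \neq 0$, so $\phi_f([x]) = [f'(x)]$ is well defined. The two displayed identities then give $f_*(f'(x)) = 0$ and $f_*'(f'(x)) = 0$, which says exactly that $\phi_f([x]) \in V(f_*)$ is a point at which the gradient of $f_*$ vanishes, i.e.\ $\phi_f([x]) \in Z_*$. Swapping the roles of $f$ and $f_*$ and invoking $f_{**} = f$ from \eqref{equation:symmetry} yields the second inclusion $\phi_{f_*}(X_* \setminus Z_*) \subseteq Z$. Non-emptiness is then automatic: $X$ is a non-empty hypersurface in $\mathbb{P}(V)$ (with $\dim V \geq 2$), so its smooth locus $X \setminus Z$ is non-empty, whence $Z_* \supseteq \phi_f(X \setminus Z) \neq \emptyset$; the same reasoning applied to $X_*$ shows $Z \neq \emptyset$.

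I do not foresee a serious obstacle: the only point requiring care is the bookkeeping of degrees when clearing denominators in the derivation of $f_*'(f'(x)) = f(x)\cdot x$. Incidentally, the analogous computation in degree $d$ would yield $f_*'(f'(x)) = f(x)^{d-2}\cdot x$, so the two inclusions stated in the lemma would hold for EKP-homaloidal polynomials of any degree $d \geq 2$; the cubic hypothesis does not appear to play an essential role in this particular step.
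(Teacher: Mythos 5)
Your proof is correct and follows essentially the same route as the paper: both derive the identity $f_*'(f'(x)) = f(x)\cdot x$ from \eqref{equation:definition} and \eqref{equation:logdifferential} using homogeneity, and read the inclusions off from it (your write-up just makes explicit the final step and the non-emptiness that the paper leaves as ``follows immediately''). One small caveat on your closing aside: in degree $d$ the identity $f_*'(f'(x)) = f(x)^{d-2}\cdot x$ only forces $f_*'(f'(x))=0$ on $X\setminus Z$ when $d\ge 3$, so the claim that the inclusions persist for $d=2$ is not justified (and indeed fails for a smooth quadric, where $Z_*=\emptyset$ but $X\ne\emptyset$).
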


\begin{proof}
  Since $f_*$ resp. $f_*'$ is homogeneous of degree 3 resp. 2,
  \eqref{equation:logdifferential} amounts to the equality
  $$
  \frac{f_*'(f'(x))}{f_*(f'(x))} = \frac {x}{f(x)}\ .
  $$
  By \eqref{equation:definition}, $f_*(f'(x)) = f(x)^2$. Thus,
  $$
  f_*'(f'(x)) = f(x) \cdot x\ .
  $$
  Our claim follows immediately.
\end{proof}

\begin{remark}
  Under the hypotheses of Theorem \ref{theorem:principal}, $f_*$ is irreducible
  too. Indeed by definition of the Legendre transform,
  we have $f_*(f'(x)) = f(x)^2$
  and in case $f_*$ is a reducible polynomial the preceding equality 
  implies that $f$ is reducible too. 
\end{remark}

\begin{lemma}
  \label{lemma:equations-Z}
  The variety $Z$, as a set, is defined by the equations:
  \begin{equation*}
    z \in Z \Longleftrightarrow \forall A \in V, Q_f(A,z,z)=0
  \end{equation*}
\end{lemma}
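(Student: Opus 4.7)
The plan is a direct unpacking of definitions combined with the polarization identity \eqref{equation:diffpolarization}. By definition, $z \in Z = \mathrm{Sing}(V(f))$ if and only if both $f(z) = 0$ and the differential $f'(z) \in V^*$ vanishes. The key identity is $f'(z)(A) = 3 Q_f(z,z,A)$, so by the symmetry of $Q_f$, the condition $f'(z)=0$ is equivalent to $Q_f(A,z,z)=0$ for every $A \in V$.

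For the forward implication, I would observe that if $z \in Z$ then $f'(z)=0$ gives $Q_f(A,z,z)=0$ for all $A$ immediately from \eqref{equation:diffpolarization}.

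For the reverse implication, the one small trick is that the condition $f(z)=0$ seems to be missing from the hypothesis. But specializing $A = z$ in the assumption $Q_f(A,z,z)=0$ yields $Q_f(z,z,z)=f(z)=0$, so $z \in X$ is automatic; then $f'(z)(A) = 3Q_f(z,z,A) = 0$ for every $A$ shows that $z$ is a singular point of $X$, hence $z \in Z$.

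There is no real obstacle here: the lemma is essentially the statement that for a cubic $f$, a singular point of $\{f=0\}$ is precisely a point $z$ on which the polarization $Q_f(\cdot,z,z)$ vanishes identically as an element of $V^*$, and this follows from the fact that one can recover $f(z)$ by setting $A=z$ in that linear form.
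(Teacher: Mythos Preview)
Your proof is correct and follows essentially the same approach as the paper: both reduce the statement to the polarization identity \eqref{equation:diffpolarization}, i.e.\ $f'(z)=3Q_f(z,z,-)$. The paper's proof is a one-liner that simply observes $z\in Z \Leftrightarrow f'(z)=0$; you are slightly more explicit in checking that the condition $f(z)=0$ is automatically recovered by specializing $A=z$ (equivalently, by Euler's identity), a point the paper leaves implicit.
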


\begin{proof}
  In fact, $Z$ is by definition the singular locus of $V(f)$, so that (recall 
  \eqref{equation:diffpolarization}) $z$ is in $Z$ if and only if $f'(z)=0$.
\end{proof}

\begin{lemma}
  \label{lemma:principal2}
  For every $A\in V$ such that $f(A)\ne 0$ the map $\tau_{f, A}$ 
  induces a linear
  isomorphism between $\mathbb{P}(V)$ and $\mathbb{P}(V^*)$ which maps $Z$
  into $X^\vee \subseteq Z_*$. An analogous statement holds true for $f_*$.
\end{lemma}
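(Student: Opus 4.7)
The plan is to make $\tau_{f,A}$ explicit as a rational expression in $Q_f$, deduce its invertibility from the EKP identities, and then exploit the drastic simplifications that occur when one restricts $f$ and $\phi_f$ to a line connecting an arbitrary $A\in V\setminus X$ to a point $z\in Z$. First, differentiating $f'/f$ at $A$ and using \eqref{equation:diffpolarization}--\eqref{equation:seconderivatives} yields
\[
\tau_{f,A}(v) \;=\; \frac{6\,Q_f(A,v,-)}{f(A)} \;-\; \frac{3\,Q_f(A,A,v)}{f(A)^2}\,f'(A),
\]
which is nothing but the Hessian of $\ln f$ at $A$. Differentiating \eqref{equation:logdifferential} at $A$ via the chain rule gives $\tau_{f_*,\,f'(A)/f(A)}\circ\tau_{f,A}=\mathrm{id}_V$, so $\tau_{f,A}$ is an isomorphism whenever $f(A)\ne 0$ and hence induces a linear isomorphism $\mathbb{P}(V)\to\mathbb{P}(V^*)$. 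The inclusion $X^\vee\subseteq Z_*$ is immediate from Lemma \ref{lemma:principal1}, since $X^\vee$ is by definition the closure of the image of the Gauss map $x\mapsto[f'(x)]=\phi_f(x)$ and $\phi_f(X\setminus Z)\subseteq Z_*$.

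The heart of the proof is showing $\tau_{f,A}(Z)\subseteq X^\vee$. Fix $z\in Z$: because $f(z)=0$, $f'(z)=0$, and $Q_f(A,z,z)=0$ by Lemma \ref{lemma:equations-Z}, both $f$ and $\phi_f$ restricted to the affine line $\ell_A = A+tz$ become linear in $t$, namely
\[
f(A+tz) \;=\; f(A) + 3t\,Q_f(A,A,z), \qquad \phi_f(A+tz) \;=\; f'(A) + 6t\,Q_f(A,z,-).
\]
Provided $Q_f(A,A,z)\ne 0$, the line meets $X$ at the single affine point $y_0 = A + t_0 z$ with $t_0 = -f(A)/(3\,Q_f(A,A,z))$. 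Plugging $t_0$ into the second formula and comparing with the explicit expression for $\tau_{f,A}(z)$ gives
\[
\phi_f(y_0) \;=\; -\frac{f(A)^2}{3\,Q_f(A,A,z)}\,\tau_{f,A}(z).
\]
Invertibility forces $\tau_{f,A}(z)\ne 0$, hence $\phi_f(y_0)\ne 0$ and so $y_0\in X\setminus Z$. Therefore $[\tau_{f,A}(z)]=[\phi_f(y_0)]$ is the tangent hyperplane to $X$ at the smooth point $y_0$ and lies in $X^\vee$.

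It remains to drop the assumption $Q_f(A,A,z)\ne 0$. The set $\{A\in V\setminus X : Q_f(A,A,z)=0\}$ is a proper Zariski-closed subset of $V\setminus X$, because $Q_f(-,-,z)\equiv 0$ would force $\tau_{f,A}(z)=0$ for all $A$, contradicting the invertibility established above. Since $V\setminus X$ is irreducible, $A\mapsto[\tau_{f,A}(z)]$ is a morphism to $\mathbb{P}(V^*)$, and $X^\vee$ is closed, the membership $[\tau_{f,A}(z)]\in X^\vee$ propagates from the dense open where the previous paragraph applies to all of $V\setminus X$. The analogous statement for $f_*$ follows by symmetry from $f_{**}=f$. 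The main obstacle I anticipate is the geometric identification in the second paragraph: one has to notice that the line through $A$ and $z\in Z$ carries essentially none of the cubic information of $f$, and that its unique affine intersection with $X$ yields exactly the point whose polar image realizes $\tau_{f,A}(z)$ up to scalar.
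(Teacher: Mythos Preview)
Your proof is correct and follows essentially the same strategy as the paper: both identify $\tau_{f,A}(z)$, up to scalar, with the tangent hyperplane to $X$ at the residual intersection of $X$ with the line joining $A$ and $z$, and then conclude by a Zariski-closure argument. The only cosmetic differences are that you take the closure in $A$ for fixed $z$ (whereas the paper closes first in $z$ for generic $A$, then in $A$), and that you rule out $Q_f(-,-,z)\equiv 0$ directly via the invertibility of $\tau_{f,A}$ rather than via the observation that a cone is never homaloidal.
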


\begin{proof}
  Let $\mathcal{V}$ be the open subset of $V$ whose elements are vectors
  $A\in V$ such that $f(A)\ne 0$ and $Q(A,A,z)\ne 0$ for at least one $z$
  in every irreducible component of $Z$. 
  We claim that $\mathcal{V}$ is not empty. 
  If it were so, let $Z'$ be an irreducible component of $Z$ such that
  $Q(A,A, z^\prime)=0$ for every $A\in V$ and every $z^\prime$ in $Z'$. 
  As a consequence
  $Q(A,B, z^\prime)=0$ for every $A,B\in V$ and $z^\prime \in Z^\prime$. 
  By 
   \eqref{equation:seconderivatives} this would
  imply that every point of $Z^\prime$ is a singular point of $X$ of order 
  three, and $X$ would be a cone. But a cone is never homaloidal, 
  a contradiction.

  If $A\in \mathcal{V}$ then by \eqref{equation:logdifferential}
  $\tau_{f, A}$ is a (linear)
  isomorphism whose inverse is given by $\tau_{f_*, f^\prime/f(A)}$
  (note that $f_*(f^\prime/f(A))= 1/f(A)\ne 0$ by definition of
  $f_*$). 
  We are going to prove that $\tau_{f, A}(Z)\subseteq X^\vee$ now 
  by giving a geometric interpretation of the map
  $\tau_{f, A}$ on an open subset of $Z$.
  Fix a vector $A\in \mathcal{V}$, and consider the subset of $Z$ given by
  \begin{equation*}
    U_A:=\{z\in Z \: |\: Q_f(A,A,z)\neq 0 \}\subset Z\ .
  \end{equation*}
  Since $A\in \mathcal{V}$, $U_A$ is a dense open subset of $Z$.
  By Lemma \ref{lemma:equations-Z},
  \begin{equation}
    Q_f(z,z, A)=0 \:
    \text{ for every } z \in Z
    \text{ and every } A \in V. 
    \label{equation:zeroterm}
  \end{equation}
  Consider the line $\overline{z,A}\subset \mathbb{P}(V)$ passing through
  $z\in U_A$ and the point in $\mathbb{P}(V)$ corresponding to $A$.  
  Using \eqref{equation:zeroterm}, it is easy to check that
  the intersection of $\overline{z,A}\subset \mathbb{P}(V)$ with $X=V(f)$
  consists of the point $z$ (with multiplicity two) and the simple
  point
  \begin{equation*}
    z':= A-\frac{Q_f(A,A, A)}
    {3Q_f(z,A,A)} z =
    A-\frac{f(A)}{f'(A)(z)} z \in
    X\setminus Z\ .
  \end{equation*}
  The embedded tangent hyperplane
  $T_{z'}(X)\in \mathbb{P}(V^*)$ of $X$ at $z'$ is given by the equation
  $f'(z')=0$. By the definition of
  $z'$, \eqref{equation:diffpolarization}
  and \eqref{equation:zeroterm}, we are able to rewrite the equation of
  $T_{z'}(X)$ as
  \begin{equation} \label{equation:tang}
    Q_f(A,A,-)-\frac{2f(A)}
    {f'(A)(z)}Q_f(z,A,-)=0\ .
  \end{equation}
  On the other hand, by computing explicitly
  $d_A\left(\frac{f'}{f}\right)(z)$ in terms of the
  polarization $Q_f$, we get that
  \begin{equation} \label{equation:explitau}
    \tau_{f, A}(z)=3\frac{2f(A)
    Q_f(z,A, -)
    -f'(A)(z)Q_f(A,A,-)}{f(A)^2}\ .
  \end{equation}
  By comparing equations (\ref{equation:tang}) and (\ref{equation:explitau}),
  we get that
  $\tau_{f, A}(z)=T_{z'}(X)$, which gives the required
  geometric interpretation of $\tau_{A, f}$ on the open subset
  $U_A\subset Z$. It follows that 
  $\tau_{f, A}(U_A)\subset X^\vee$ and hence $\tau_{f, A}(Z)\subset X^\vee$. 
  Since this last condition is a closed one it holds true for every
  $A\in V$ such that $f(A)\ne 0$.
\end{proof}

\begin{definition}
  Let $G \subset GL(V)$ be the subgroup preserving $X$:
  \begin{equation*}
    G:=\{ g\in GL(V)\: :\: g(X)=X \}\ .
  \end{equation*}
\end{definition}

\begin{corollary}
  \label{corollary:action}
  \begin{itemize}
    \item[(i)] For each $A\in V \setminus \{ f=0 \}$ 
      we have that $\tau_{A,f}$ is a linear
      isomorphism that sends $Z$ onto $Z_*$ and $X$ onto $X_*$.
    \item[(ii)] $\phi_f(X\setminus Z)=Z_*=X^\vee$ and 
      $\phi_{f_*}(X_*\setminus Z_*)=Z=(X_*)^\vee$.
    \item[(iii)] For every $z_1,z_2 \in Z $ there exist
      $A_1,A_2\in V\setminus \{f=0 \}$ 
      such that 
      \begin{equation*}
	\tau_{f,A_2}^{-1} \circ \tau_{f,A_1}(z_1)=z_2, 
      \end{equation*}
      hence $G$ acts transitively on $Z$, and $Z$, $Z_*$ are smooth. 
    \item[(iv)] The fibers of the Gauss
      map $\phi_f:X \rightarrow Z_*$ are all linear spaces of dimension
      $\dim X - \dim Z_*$ and $G$ acts transitively on them i.e. for every
      couple of distinct fibers of $\phi_f$, $F_1$ and $F_2$, there exists an
      element
      $g\in G$ such that $g(F_1)=F_2$. 
  \end{itemize}
\end{corollary}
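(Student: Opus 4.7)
The plan is to establish (i)--(iv) in sequence, relying on Lemma~\ref{lemma:principal2} and its symmetric application to $f_*$. The maps $\tau_{f,A}$ are the workhorse throughout.

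For part~(i), that $\tau_{f,A}$ is a linear isomorphism with inverse $\tau_{f_*, f'(A)/f(A)}$ is already in Lemma~\ref{lemma:principal2}. The inclusion $\tau_{f,A}(Z) \subseteq Z_*$ comes from Lemma~\ref{lemma:principal2} ($\tau_{f,A}(Z) \subseteq X^\vee$) combined with Lemma~\ref{lemma:principal1}, which yields $X^\vee \subseteq Z_*$ upon taking closure of the Gauss image. The reverse inclusion follows from the symmetric statement applied to $f_*$ at $B = f'(A)/f(A)$, giving $\tau_{f,A}^{-1}(Z_*) \subseteq Z$. For the more subtle claim $\tau_{f,A}(X) = X_*$ I would establish the polynomial identity
$$ f_*(\tau_{f,A}(x)) = -\frac{f(x)}{f(A)^2} $$
by differentiating $f_*(f'(x)) = f(x)^2$ (derived in the proof of Lemma~\ref{lemma:principal1}) and combining with the explicit formula \eqref{equation:explitau} for $\tau_{f,A}$; this shows that $\tau_{f,A}^{*}f_*$ is proportional to $f$ and so $\tau_{f,A}$ carries $X$ bijectively onto $X_*$.

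For part~(ii), $X^\vee = Z_*$ is immediate from (i) since the chain $Z_* = \tau_{f,A}(Z) \subseteq X^\vee \subseteq Z_*$ collapses. The inclusion $\phi_f(X \setminus Z) \subseteq Z_*$ is Lemma~\ref{lemma:principal1}. For the reverse, given $y \in Z_*$, choose $A \in \mathcal{V}$ with $z := \tau_{f,A}^{-1}(y) \in U_A$; by the geometric identification in the proof of Lemma~\ref{lemma:principal2}, $y = \tau_{f,A}(z) = \phi_f(z')$ for the residual intersection $z' = A - \frac{f(A)}{f'(A)(z)} z \in X \setminus Z$ of the line $\overline{z, A}$ with $X$. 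The existence of such an $A$ is arranged by genericity, or alternatively deduced after (iii) via $G$-invariance of $\phi_f(X \setminus Z)$ combined with $G$-transitivity on the irreducible variety $Z_* = X^\vee$.

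For part~(iii), fix $z_1, z_2 \in Z$. The same parametrization shows that $\{\tau_{f,A}(z_i) : A \in \mathcal{V}\}$ is dense in $\phi_f(X \setminus Z)$, hence in $Z_* = X^\vee$, which is irreducible because $X$ is. Two dense constructible subsets of an irreducible variety intersect, so there exist $A_1, A_2$ with $\tau_{f,A_1}(z_1) = \tau_{f,A_2}(z_2)$; the composition $g := \tau_{f,A_2}^{-1} \circ \tau_{f,A_1} \in GL(V)$ satisfies $g(z_1) = z_2$, and by (i), $g(X) = \tau_{f,A_2}^{-1}(X_*) = X$, so $g \in G$. Transitivity makes $Z$ a homogeneous space, hence smooth, and the symmetric argument applied to $f_*$ gives smoothness of $Z_*$.

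For part~(iv), any $g \in G$ satisfies $f \circ g = c(g) f$ by irreducibility of $f$; then a direct check from $f_*(f'(x)) = f(x)^2$ shows $g^{-t}$ preserves $X_*$ and $Z_*$, with $\phi_f \circ g = g^{-t} \circ \phi_f$ projectively. Thus $G$ permutes the fibers of $\phi_f|_X$, and transitivity on $Z_*$ (from the symmetric form of (iii)) yields transitivity on fibers. Linearity of the general Gauss fiber is a classical projective-geometry fact (Griffiths--Harris, or Zak's book): for an irreducible projective variety whose Gauss map has positive-dimensional fibers, the general fiber is a linear subspace; combined with $G$-transitivity this makes every fiber linear of common dimension $\dim X - \dim Z_*$. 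The main obstacle I anticipate is the polynomial identity underpinning $\tau_{f,A}(X) = X_*$ in (i); the Gauss-fiber linearity fact in (iv) can simply be cited.
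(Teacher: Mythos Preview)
Your argument is correct, and for parts (ii) and (iv) it follows the paper's proof closely. For (iii) you take a genuinely different route: the paper works on the $f_*$ side, choosing $p_i \in X_*\setminus Z_*$ with $\phi_{f_*}(p_i)=z_i$ and then finding a common $\xi\in Z_*$ (avoiding two tangent-cone loci) such that $\tau_{f_*,B_i}(\xi)=z_i$ for suitable $B_i$. You instead observe that for each fixed $z_i$ the set $\{\tau_{f,A}(z_i):A\}$ coincides, via the geometric interpretation in Lemma~\ref{lemma:principal2}, with the Gauss image of a dense open subset of $X\setminus Z$, hence is dense in the irreducible variety $Z_*=X^\vee$; two dense constructible subsets of an irreducible variety meet, giving the desired $A_1,A_2$. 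This is a clean alternative that avoids the tangent-cone bookkeeping.

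One point deserves care. The paper's proof of (i) actually establishes only $\tau_{f,A}(Z)=Z_*$ and $X^\vee=Z_*$, $(X_*)^\vee=Z$; it does not spell out why $\tau_{f,A}(X)=X_*$, though this is needed (also by you in (iii)) to conclude $\tau_{f,A_2}^{-1}\circ\tau_{f,A_1}\in G$. Your proposed route via the identity $f_*(\tau_{f,A}(x))=-f(x)/f(A)^2$ is correct in the end, but it is not as immediate as ``differentiating $f_*(f'(x))=f(x)^2$'': a single differentiation only controls the $2$-jet of the cubic $x\mapsto f_*(\tau_{f,A}(x))$ at $x=A$, which does not determine it. A shorter argument is available once $\tau_{f,A}(Z)=Z_*$ and the dual-variety identifications are in hand: by biduality $(Z_*)^\vee=X$ and $Z^\vee=X_*$; for any linear isomorphism $\tau$ one has $(\tau(Y))^\vee=(\tau^t)^{-1}(Y^\vee)$, and since $\tau_{f,A}$ is the Hessian of $\log f$ it is symmetric, $\tau_{f,A}^t=\tau_{f,A}$. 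Applying this with $Y=Z$ gives $X=(\tau_{f,A}(Z))^\vee=\tau_{f,A}^{-1}(Z^\vee)=\tau_{f,A}^{-1}(X_*)$, i.e.\ $\tau_{f,A}(X)=X_*$.
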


\begin{proof}
  We start by proving (i). By Lemma \ref{lemma:principal2} if $A\in V$ and
  $A_*\in V^*$ are such that $f(A)\ne 0$ and $f_*(A_*)\ne 0$ then 
  $\tau_{f,A}(Z)\subseteq X^\vee \subseteq Z_*$ and
  $\tau_{f_*,A^*}(Z_*)\subseteq (X_*)^\vee \subseteq Z$.
  If we
  choose $A\in V \setminus \{f=0 \}$ and $A_* = f'(A)/f(A)$,
  then by definition of $f_*$ 
  we have $A_* \in V^*\setminus \{f_*=0 \}$ and 
  by \eqref{equation:logdifferential}
  \begin{equation*}
    \tau_{f,A} \circ \tau_{f^*,A^*}= \mathrm{Id}_V
  \end{equation*}
  and similarly $\tau_{A_*,f_*} \circ \tau_{f,A} = \mathrm{Id}_{V^*}$.
  This implies $\tau_{f,A}(Z)=Z_*$ and $X^\vee = Z_*$. 
  Moreover $(X_*)^\vee=Z$ in view of the symmetry \eqref{equation:symmetry}. 
  Recalling the geometric interpretation of the maps $\tau_{f,A}$ 
  given in the proof of Lemma \ref{lemma:principal2} this 
  completes the proof of item (ii) too.  

  We are going to prove (iii) now. Let $z_1,z_2\in Z$; by item (ii)  
  there exist points $p_1,p_2\in X_*\setminus Z_*$ such that
  $z_i=T_{p_i}(X_*)$.
  Given $z \in X_*$ we denote by $K_zX_* \subset \mathbb{P} (V^*)$ the tangent
  cone to $X_*$ at $z$, defined by the equivalence
  $u \in K_zX_* \Leftrightarrow Q_{f_*}(z,u,u) = 0$.
  Consider the following closed subsets of $Z_*$, 
  \begin{equation*}
    K_i=\{z \in Z_*\  |\ \overline{z,p_i} \subset K_z X_* \}
  \end{equation*}
  for $i=1,2$. Observe that $K_i\ne Z_*$. Indeed if it were so, the cone
  with vertex $p_i$ and base $Z_*$
  would be contained in $T_{p_i}(X_*)$ and $Z_*$ would be
  degenerate. Then $X$ would be a cone, but a cone is never homaloidal. Put
  $U_i=Z_*\setminus K_i$, it follows that there are points $\xi \in U_1\cap
  U_2\ne \emptyset$ and $B_i\in \mathbb{P}(V^*)\setminus X_*$ such that by the
  geometric interpretation of the maps $\tau_{f_*,-}$ given in the proof 
  of Lemma \ref{lemma:principal2} we have 
  \begin{equation*}
    \tau_{f_*,B_i}(\xi)=z_i
  \end{equation*}
  and hence
  \begin{equation*}
    \tau_{f_*,B_2}\circ \tau_{f_*,B_1}^{-1}(z_1)=z_2
  \end{equation*}
  or in other words 
  \begin{equation*}
    \tau_{f,A_2}^{-1} \circ \tau_{f,A_1}(z_1)=z_2 
  \end{equation*}
  where $A_1=(f_*^\prime/f_*)(B_1)$ and $A_2=(f_*^\prime/f_*)(B_2)$. It follows
  that $G$ acts transitively on $Z$ and $Z$ is smooth. By (iii) this 
  implies that $Z_*$ is smooth too. 

  Concerning item (iv), by the duality theorem, a general fiber of $\phi_f$
  is a linear space of dimension $\dim X - \dim X^\vee$. Since $X^\vee=Z_*$
  is homogeneous under the action of $G$, this is true for any fiber.
\end{proof} 

We are now ready to prove Theorem \ref{theorem:principal}.

\begin{proof}[Proof of Theorem \ref{theorem:principal}]
  We may now argue as in \cite{chaput:scorza} to prove that
  $G$ acts transitively on $V \setminus \{ f=0 \}$
  (see \cite[Lemma 3.1]{chaput:scorza}). 
  Fix an element $I\in V$ such that $f(I)\neq 0$. For any $A\in V$ such
  that $f(A)\neq 0$, consider the map $(\tau_{f, I})^{-1}\circ \tau_{f,
  A}\in GL(V)$. By Lemma \ref{lemma:principal2}, we get that
  $(\tau_{f, I})^{-1}\circ \tau_{f, A}\in G$. Therefore the $G$-orbit of
  $I$ contains the image of the map
  $$\begin{aligned}
    \varphi_I:& V \setminus \{f=0\} \longrightarrow V \\
    A & \mapsto (\tau_{f, I})^{-1}(\tau_{f, A}(I)).
  \end{aligned}$$
  We are going now to compute the differential of $\varphi_I$. First of
  all, observe that the map $(\tau_{f, I})^{-1}(-)$ is linear, hence we
  only need to compute $d_I\big(A\mapsto \tau_{f,A}(I)\big)$.  By
  \eqref{equation:explitau} and \eqref{equation:logdifferential}, a
  straightforward
  computation shows that
  \begin{equation}
    \tau_{f,I+tB}(I)-\tau_{f,I}(I)=-2\tau_{f,I}(B)t+o(t).
    \label{equation:}
  \end{equation}
  Moreover observe that $d_I\big(A\mapsto \tau_{f,A}(I)\big)$ is equal
  to the directional derivative of the map $A\mapsto \tau_{f,A}(I)$ with
  respect to $B$ computed in $I$. It follows that
  \begin{displaymath}
    d_I\big(A\mapsto \tau_{f,A}(I)\big)(B)=-2\tau_{f,I}(B)
  \end{displaymath}
  and then
  \begin{displaymath}
    d_I(\varphi_I)=(\tau_{f,I})^{-1}\circ d_I(A\mapsto \tau_{f,A}(I))=
    -2 \cdot {\rm Id}.
  \end{displaymath}
  Therefore the image of the map $\varphi_I$, and hence the $G$-orbit of
  $I$, is a Zariski dense open subset of $V$ (see \cite{kimura:ipvs} pg.~23
  Lemma 2.1).  Since we can repeat the same argument for all the
  elements $I\in V$ such that $f(I)\neq 0$, we conclude that $G$ acts
  transitively on $V \setminus \{f=0 \}$.

  Moreover the argument in \cite[Proposition 3.1]{chaput:scorza}
  and computations in \cite[Proposition 3.2]{chaput:scorza} show
  that $V$ is indeed a symmetric
  prehomogeneous space i.e. there exists an involution of the group $G$ such
  that the stabilizer of a point in the dense orbit is contained in the set of
  fixed 
  points for the involution and moreover contains the connected component of the
  identity of this set.

  We are going to show that the action of $G$ on $V$ is irreducible now.
  Let $W\subset V$ a
  proper subspace of $V$ invariant for the action of $G$. Since $G$ acts
  transitively on $V\setminus \{f=0 \}$ and $W$ is a proper subspace we must 
  have $W\subset \{f=0 \}$. 
  If $W\cap Z\ne \emptyset$, since $G$ acts transitively on $Z$ too (see
  Corollary \ref{corollary:action} (ii)), then $Z \subset \mathbb {P}(W)$.
  But then its
  dual variety is a cone, so that $V(f_*)$ is a cone, and this contradicts
  the fact that $f_*$ is homaloidal.
  It follows that $W \subset \mathrm{Sm}(X)$, and by Corollary
  \ref{corollary:action} (iv), it intersects every fiber of the Gauss map. It
  follows that every tangent hyperplane to $X$ contains the subspace $W$, since
  every tangent hyperplane is tangent to $X$ in a point of $W$. 
  But then $X$ has degenerate dual, a contradiction.

  The proof of the theorem is now complete.
\end{proof}

Before proceeding to the proof of Corollary \ref{corollary:principal} 
we are going to recall a
result of McCrimmon \cite{mccrimmon}. Let $F$ be an homogeneous polynomial of
degree $d$ on the vector space $V$. Fix a point $I\in V$ such that,
$F(I)\ne 0$ and $\tau_{F,I}$ is a non-degenerate bilinear form. There exists
then a rational mapping
\begin{gather*}
  H: V\rightarrow \mathrm{Hom}(V,V) \\
  A\mapsto H_A
\end{gather*}
defined for all $A\in V$ with $F(A)\ne 0$, such that
\begin{equation*}
  \tau_{F,A}(B,C)= \tau_{F,I}(H_A(B),C)
\end{equation*}
for every $A,B,C\in V$.
\begin{theorem} \cite[Theorem 1.2]{mccrimmon}
  If $F$ satisfies the relation $F(H_A(B))=h(A)Q(B)$ whenever both sides are
  defined, $h$ a suitable rational function, then the product
  \begin{equation}
    A*B=- \frac{1}{2}d_I \big( M\mapsto H_M(B) \big) (A)
    \label{equation:product}
  \end{equation}
  defines on the vector space $V$ a Jordan algebra structure. If moreover we
  choose $I$ such that $F(I)=1$ then 
  \begin{equation}
    F(R_A(B))=F(A)^2F(B)
    \label{equation:composition}
  \end{equation}
  where $R_A$ denotes the quadratic representation of the Jordan algebra.
  \label{theorem:mccrimmon}
\end{theorem}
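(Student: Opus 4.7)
The plan is to verify the Jordan algebra axioms for the bilinear product $*$ defined by \eqref{equation:product} and then to deduce the composition formula \eqref{equation:composition} from the hypothesis on $F(H_A(B))$.

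I would begin with the elementary features of the rational map $A\mapsto H_A$. Since $\tau_{F,A}$ is a Hessian it is a symmetric bilinear form, and combined with the defining identity $\tau_{F,A}(B,C)=\tau_{F,I}(H_A(B),C)$ this forces $H_A$ to be self-adjoint with respect to the non-degenerate form $\tau_{F,I}$. Setting $A=I$ gives $H_I=\mathrm{Id}_V$, while the homogeneity $\tau_{F,\lambda A}=\lambda^{-2}\tau_{F,A}$ yields $H_{\lambda A}=\lambda^{-2}H_A$, so Euler's relation gives $d_I(M\mapsto H_M(B))(I)=-2B$; substituting into \eqref{equation:product} shows $I*B=B$ for every $B\in V$. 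Commutativity $A*B=B*A$ then follows from recognising $\tau_{F,I}(A*B,C)$ as a totally symmetric trilinear form in $(A,B,C)$, being essentially the third polarization of $\log F$ at $I$; this also gives $B*I=B$, so $I$ is a two-sided identity and the product is commutative and unital.

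The main obstacle is the Jordan identity $A^2*(A*B)=A*(A^2*B)$, and it is precisely here that the hypothesis $F(H_A(B))=h(A)Q(B)$ is indispensable. My strategy is to differentiate this rational identity separately in the $A$ and the $B$ variables, specialise at $A=I$, and translate the resulting derivative relations into operator identities on $V$ involving $L_A$, $L_{A^2}$ and $H_A$; combined with the self-adjointness and the degree $-2$ homogeneity of $H$ this yields exactly the Jordan identity. Without the proportionality hypothesis the product $*$ would only be a commutative unital multiplication with no additional structure, so this step is the true content of McCrimmon's theorem and the place where the most care is required.

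Finally, for the composition formula \eqref{equation:composition} under the normalization $F(I)=1$, I would identify the quadratic representation $R_A=2L_A^2-L_{A^2}$ with $H_A^{-1}$ up to an explicit scalar factor, a classical relation in Jordan algebra theory between the Hessian of the generic norm and the quadratic representation. Applying $F$ to $R_A(B)$ and using $F\circ H_A=h(A)F$ together with $F(I)=1$ pins down the scalar $h(A)=F(A)^{-2}$ and delivers the desired relation $F(R_A(B))=F(A)^2F(B)$.
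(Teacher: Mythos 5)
First, a point of reference: the paper does not prove this statement at all. It is quoted from McCrimmon's paper (hence the citation in the header of Theorem \ref{theorem:mccrimmon}) and is used only as a black box in the proof of Corollary \ref{corollary:principal}. So there is no internal argument to compare yours with, and your attempt has to stand on its own as a proof of McCrimmon's theorem.

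On that score, the elementary part of your sketch is correct: $H_A$ is self-adjoint for $\tau_{F,I}$, $H_I=\mathrm{Id}_V$, the homogeneity $H_{\lambda A}=\lambda^{-2}H_A$ together with Euler's relation gives $I*B=B$, and recognising $\tau_{F,I}(A*B,C)$ as (a multiple of) the third logarithmic differential of $F$ at $I$ gives bilinearity, commutativity, and the fact that $I$ is a two-sided unit. But the two substantive claims are only announced, not proved. The Jordan identity $A^2*(A*B)=A*(A^2*B)$ \emph{is} the content of the theorem, and your description of the strategy --- differentiate $F(H_A(B))=h(A)Q(B)$ in each variable, specialise at $A=I$, and translate into operator identities --- is a statement of intent: you do not say which derivatives are taken, which operator identities result, or how they combine with self-adjointness and homogeneity to produce the Jordan identity. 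In McCrimmon's argument this is the bulk of the work and passes through genuinely nontrivial identities of the type $H_{H_A(B)}=H_AH_BH_A$, i.e.\ essentially the fundamental formula. The same problem affects the second half: the identification of the quadratic representation with a scalar multiple of $H_A^{-1}$, which you invoke as ``a classical relation'' to derive \eqref{equation:composition}, is a theorem \emph{about} the Jordan algebra being constructed, so quoting it here is circular unless you derive it from the definition of $*$ and the hypothesis on $F$ --- and that derivation again rests on the missing operator identities. As written, the proposal is a plausible outline with the hard core absent. (A minor side remark: the paper's displayed definition $R_A:=L_A^2-L_{A^2}$ must be a typo, since that operator annihilates $I$; the standard quadratic representation is $2L_A^2-L_{A^2}$, which is the formula you use.)
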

Recall that the quadratic representation of a Jordan algebra is defined in the
following way. Denote by $L_A$ the linear map induced on $V$ by left
multiplication by an element $A$ of the Jordan algebra. Then 
\begin{equation*}
  R_A:= L_A^2-L_{A^2}\ .
\end{equation*}
\begin{remark}
  An element $A$ in a Jordan algebra is invertible if and only if $R_A$ is an
  invertible linear map, see \cite[Proposition II.3.1, p.~33]{asc}. It follows
  that in the Jordan algebra defined by \eqref{equation:product} if
  \eqref{equation:composition} holds and $\{ F=0  \}$ is not contained in an
  hyperplane 
  then the set of invertible elements 
  coincide with $V\setminus \{ F=0\}$.
  \label{remark:invertibles}
\end{remark}

Finally, we deduce the announced Corollary \ref{corollary:principal} from
Theorem \ref{theorem:principal}.

\begin{proof}[Proof of Corollary \ref{corollary:principal}]
  By Theorem \ref{theorem:principal} above, $V$ is an irreducible
  prehomogeneous symmetric space. 
  Fix an element $I\in V$ such that $f(I)=1$. Observe that the map $H_A$ in
  Theorem \ref{theorem:principal} equals $\tau_{f,I}^{-1}\circ \tau_{f,A}$ and
  then $H_A\in G$. It follows that there exists a rational function $h$ 
  such that $f(H_A(B))=h(A)f(B)$ and then \eqref{equation:product} defines on
  $V$ a structure of Jordan algebra in such a way that
  \eqref{equation:composition} is satisfied. 
  A direct computation, see \cite[p.~179]{chaput:scorza}, 
  shows that the above product
  coincides with the one defined in \cite[p.~43]{bertram}. 
  By \cite[Theorem V.4.6]{bertram} $V$ is a simple Jordan algebra and,
  recall Remark \ref{remark:invertibles}, its
  determinant coincides with a scalar multiple of $f$. 
  By the classification of simple Jordan algebras
  \cite[pp.~204--205, Corollary 2]{jacobson}, the singular locus of $V(f)$ 
  is a Severi variety (see \cite[Chapter IV, Theorem 4.8]{zak}).  
\end{proof}

\bibliography{mybiblio}{}

\begin{thebibliography}{EKP02}

\bibitem[Ber00]{bertram}
Wolfgang Bertram.
\newblock {\em The geometry of Jordan and Lie structures}, volume 1754 of {\em
  Lecture Notes in Mathematics}.
\newblock Springer-Verlag, Berlin, 2000.

\bibitem[Cha03]{chaput:scorza}
Pierre-Emmanuel Chaput.
\newblock Scorza varieties and {J}ordan algebras.
\newblock {\em Indag. Math. (N.S.)}, 14(2):169--182, 2003.

\bibitem[Dol00]{dolgachev:pct}
Igor~V. Dolgachev.
\newblock Polar {C}remona transformations.
\newblock {\em Michigan Math. J.}, 48:191--202, 2000.
\newblock Dedicated to William Fulton on the occasion of his 60th birthday.

\bibitem[EKP02]{ekp}
Pavel Etingof, David Kazhdan, and Alexander Polishchuk.
\newblock When is the {F}ourier transform of an elementary function elementary?
\newblock {\em Selecta Math. (N.S.)}, 8(1):27--66, 2002.

\bibitem[FK94]{asc}
Jacques Faraut and Adam Kor{\'a}nyi.
\newblock {\em Analysis on symmetric cones}.
\newblock Oxford Mathematical Monographs. The Clarendon Press Oxford University
  Press, New York, 1994.
\newblock Oxford Science Publications.

\bibitem[Jac68]{jacobson}
Nathan Jacobson.
\newblock {\em Structure and representations of Jordan algebras}, volume~39 of
  {\em American Mathematical Society Colloquium Publications}.
\newblock American Mathematical Society, Providence, R.I., 1968.

\bibitem[Kim03]{kimura:ipvs}
Tatsuo Kimura.
\newblock {\em Introduction to prehomogeneous vector spaces}, volume 215 of
  {\em Translations of Mathematical Monographs}.
\newblock American Mathematical Society, Providence, RI, 2003.
\newblock Translated from the 1998 Japanese original by Makoto Nagura and
  Tsuyoshi Niitani and revised by the author.

\bibitem[McC65]{mccrimmon}
K.~McCrimmon.
\newblock Norms and noncommutative {J}ordan algebras.
\newblock {\em Pacific J. Math.}, 15:925--956, 1965.

\bibitem[Zak93]{zak}
Fyodor Zak.
\newblock {\em Tangents and secants of algebraic varieties}, volume 127 of {\em
  Translations of Mathematical Monographs}.
\newblock American Mathematical Society, Providence, RI, 1993.
\newblock Translated from the Russian manuscript by the author.

\end{thebibliography}
\bibliographystyle{alpha}
\end{document}